\newtheorem{thm}{Theorem}[section]
\newtheorem{lem}[thm]{Lemma}
\newtheorem{cor}[thm]{Corollary}
\newtheorem{thmA}{Theorem}
\newtheorem{corA}[thmA]{Corollary}
\theoremstyle{definition}
\newtheorem{defn}[thm]{Definition}
\theoremstyle{remark}
\numberwithin{equation}{section}
\DeclareMathOperator{\B}{{\rm B}_{\pi}}
\DeclareMathOperator{\N}{{\rm N}_{\pi}}
\DeclareMathOperator{\D}{{\rm D}_{\pi}}
\DeclareMathOperator{\I}{{\rm I}_\pi}
\newcommand{\NM}{\vartriangleleft}
\DeclareMathOperator{\Irr}{Irr}
\DeclareMathOperator{\IBr}{IBr}
\begin{document}

\title{The uniqueness of vertex pairs in $\pi$-separable groups}

\author{Lei Wang}
\address{School of Mathematical Sciences, Shanxi University, Taiyuan, 030006, China.}
\email{wanglei0115@163.com}

\author{Ping Jin*}
\address{School of Mathematical Sciences, Shanxi University, Taiyuan, 030006, China.}
\email{jinping@sxu.edu.cn}

\thanks{*Corresponding author}

\keywords{lifts, vertex pairs, twisted vertices, $\pi$-factored characters,
$\pi$-partial characters}

\date{}

\maketitle

\begin{abstract}
Let $G$ be a finite $\pi$-separable group, where $\pi$ is a set of primes,
and let $\chi$ be an irreducible complex character that is a $\pi$-lift of some $\pi$-partial character of $G$.
It was proved by Cossey and Lewis that all of the vertex pairs for $\chi$ are linear and conjugate in $G$ if $2\in\pi$, but the result can fail for $2\notin\pi$.
In this paper we introduce the notion of the twisted vertices in the case where $2\notin\pi$, and establish the uniqueness for linear twisted vertices under the conditions that either $\chi$ is an $\mathcal N$-lift for a $\pi$-chain $\mathcal N$ of $G$ or it has a linear Navarro vertex, thus answering a question proposed by them.
\end{abstract}

%%%--------------------------------------------
\section{Introduction}
Let $G$ be a finite $\pi$-separable group for a set $\pi$ of primes,
and let $\chi\in\Irr(G)$ be an irreducible complex character of $G$.
Suppose that $Q$ is a $\pi'$-subgroup of $G$ and $\delta\in\Irr(Q)$.
Following Cossey \cite{C2010}, we say that the pair $(Q,\delta)$ is a {\bf vertex pair} for $\chi$ if there exists a subgroup $U$ of $G$ such that $Q$ is a Hall $\pi'$-subgroup of $U$
and $\delta=(\psi_{\pi'})_Q$, where $\psi\in\Irr(U)$ is $\pi$-factored with $\psi^G=\chi$ and $\psi_{\pi'}$ is the $\pi'$-special factor of $\psi$.
In particular, if $U$ and $\psi$ are chosen
so that the pair $(U,\psi)$ is a normal nucleus of $\chi$ (see \cite{N2002} for details),
then $(Q,\delta)$ is known as a {\bf Navarro vertex} for $\chi$.
We remark that in general, all of the vertex pairs for $\chi$ need not be conjugate,
but the Navarro vertex for $\chi$ is unique up to conjugacy.
In this paper we are interested in {\bf linear vertex pairs},
that is, those vertex pairs $(Q,\delta)$ with $\delta$ a linear character.

Linear vertex pairs play an essential role in proving theorems about lifts of Brauer characters or more general Isaacs' $\pi$-partial characters; see, for example, \cite{C2007, C2008, C2009, CL2011, CLN2011, C2012, CL2012, N2002} and especially the review paper \cite{C2010} mentioned above. A crucial result is that in the case where $|G|$ is odd or $2\in\pi$,
all of the vertex pairs for a $\pi$-lift are linear and conjugate
(see Theorem 1.1 of \cite{C2008}, or Theorem 4.1 and Lemma 4.3 of \cite{CL2012}).
Recall that $\chi\in\Irr(G)$ is said to be a {\bf $\pi$-lift} if its restriction $\chi^0$ to the set $G^0$ of $\pi$-elements of the $\pi$-separable group $G$ is an irreducible $\pi$-partial character of $G$.
(For definition and properties of the $\pi$-partial characters, we refer to \cite{I2018};
here we mention that if $G$ is $p$-solvable and $\pi=p'$ is the complement of the prime $p$,
then the set $\I(G)$ of irreducible $\pi$-partial characters of $G$ is exactly the set $\IBr_p(G)$ of irreducible $p$-Brauer characters of $G$.)

The aim of the present paper is to explore the uniqueness of linear vertex pairs in the remaining case where $2\notin\pi$.
In order to study the behavior of a given $\pi$-lift with respect to a chain of normal subgroups,
Cossey and Lewis \cite{CL2011} introduced the notion of the inductive vertices
for $\mathcal N$-lifts, and showed that these vertex pairs are linear and `almost' unique up to conjugacy (Theorem 1.2 of that paper; see also Theorem 5.4 of \cite{C2010}).
Here $\mathcal N$ is a {\bf $\pi$-chain} of $G$,
which by definition is a chain of normal subgroups of $G$:
$$\mathcal N=\{1=N_0\le N_1\le \cdots \le N_{k-1}\le N_k=G\}$$
such that $N_i/N_{i-1}$ is either a $\pi$-group or a $\pi'$-group for $i=1,\dots,k$,
and $\chi\in\Irr(G)$ is an {\bf $\mathcal N$-lift} if every irreducible constituent of $\chi_N$ is a $\pi$-lift for all $N\in\mathcal N$.
Inspired by their work, we define the {\bf twisted vertex}
for any $\chi\in\Irr(G)$ to be $(Q,\epsilon\delta)$
by adding some permutation sign character $\epsilon$
to the original vertex pair $(Q,\delta)$ of $\chi$ (see Definition 2.1 below),
and we will study the uniqueness of such vertices.

\begin{thmA}
Let $G$ be a $\pi$-separable group with $2\notin\pi$, and
suppose that $\chi\in\Irr(G)$ is an $\mathcal{N}$-lift for some $\pi$-chain $\mathcal{N}$ of $G$. Then all of the linear twisted vertices for $\chi$ are conjugate in $G$.
\end{thmA}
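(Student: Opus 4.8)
The plan is to induct on $|G|$, carrying along a $\pi$-chain $\mathcal{N}=\{1=N_0\le N_1\le\cdots\le N_k=G\}$ witnessing the hypothesis. If $k\le1$ then $G$ is a $\pi$-group or a $\pi'$-group, and the claim is immediate: in the first case the only vertex pair for $\chi$ is the trivial one, and in the second $\chi$ must itself be linear (a $\pi$-lift of a $\pi'$-group is linear, since $\I(G)$ then consists of the trivial $\pi$-partial character alone), so it has a unique twisted vertex. Assume therefore $k\ge2$ and put $N=N_1\ne1$, a normal $\pi$- or $\pi'$-subgroup of $G$; then $G/N$ inherits the $\pi$-chain $\{N_i/N\}$.

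First I would carry out the Clifford reduction. Fix $\theta\in\Irr(N)$ lying under $\chi$ and set $T=I_G(\theta)$. If $T<G$, let $\widehat\chi\in\Irr(T\mid\theta)$ be the Clifford correspondent, so that $\widehat\chi^{G}=\chi$; then $\mathcal{N}_T=\{N_i\cap T\}$ is a $\pi$-chain of $T$, and $\widehat\chi$ is an $\mathcal{N}_T$-lift because the constituents of its restrictions to the $N_i\cap T$ are, through Clifford correspondence inside the $N_i$, the Clifford correspondents of constituents of the $\chi_{N_i}$, and passing to a Clifford correspondent preserves the property of being a $\pi$-lift. By the inductive hypothesis the linear twisted vertices of $\widehat\chi$ are $T$-conjugate, so it remains to show that the linear twisted vertices of $\chi$ in $G$ are exactly the $G$-conjugates of those of $\widehat\chi$ in $T$. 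This is the expected compatibility of (twisted) vertex pairs with the Clifford correspondence: any $\pi$-factored $\psi\in\Irr(U)$ with $\psi^G=\chi$ may, after conjugating $(U,\psi)$ in $G$, be taken to be induced from a subgroup of $T$ over $\theta$, compatibly with the formation of $\pi'$-special factors and --- what matters most --- with the permutation sign character $\epsilon$ of Definition~2.1; checking that $\epsilon$ is undisturbed is routine but requires care.

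Next I would treat the case $T=G$, where $\chi_N=e\theta$ with $\theta$ a $G$-invariant $\pi$-lift of $N$, hence $\pi$-factored ($\pi$-special when $N$ is a $\pi$-group, linear when $N$ is a $\pi'$-group). Here I would follow the pattern of Cossey's treatment of the case $2\in\pi$: when $\theta$ is linear and extends to $G$, twist it off and pass to $\overline\chi\in\Irr(G/N)$, which is a lift for the inherited chain $\{N_i/N\}$; in general, replace the character triple $(G,N,\theta)$ by an isomorphic one of the suitable ``special-preserving'' type in which the analogue of $\theta$ is linear, faithful and central, verify that the transported character is still an $\mathcal{N}$-lift for the transported chain, and descend to a smaller section. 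At each step one must match the twisted vertices of $\chi$ with those of the smaller character, tracking $Q$, $\delta$ and $\epsilon$ together; a normal $\pi$-subgroup inside the kernel is invisible to Hall $\pi'$-structure, and a normal $\pi'$-subgroup inside the kernel is absorbed into the Hall $\pi'$-subgroups with trivial contribution to $\epsilon$, which is what makes the matchings work.

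The hard part will be precisely the bookkeeping of $\epsilon$ through these reductions --- above all through the character-triple isomorphism in the non-linear case --- since this is where the hypothesis $2\notin\pi$ really enters. When $2\in\pi$ the ordinary vertex pairs are already conjugate and no such bookkeeping is needed; for $2\notin\pi$ the sign discrepancy between vertex pairs arising from different nuclei is genuine, and the whole point of Definition~2.1 is that the twist is calibrated to cancel exactly this discrepancy. I expect the decisive ingredient to be a lemma of the following shape: if $\psi_1\in\Irr(U_1)$ and $\psi_2\in\Irr(U_2)$ are $\pi$-factored with $\psi_1^G=\psi_2^G=\chi$, and their $\pi'$-special factors restrict to linear characters $\delta_1,\delta_2$ on Hall $\pi'$-subgroups $Q_1,Q_2$, then after conjugating so that $Q_1=Q_2=Q$ one has $\delta_1\delta_2^{-1}=\epsilon_1\epsilon_2^{-1}$ on $Q$, where $\epsilon_i$ is the permutation sign attached to $(Q_i,\delta_i)$. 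Granting such a lemma, together with the uniqueness up to $G$-conjugacy of the Navarro vertex and the reductions above, all the linear twisted vertices of $\chi$ fall into a single $G$-conjugacy class, which completes the induction; this is also the precise sense in which twisting upgrades the ``almost'' uniqueness of the inductive vertices of Cossey and Lewis to genuine uniqueness.
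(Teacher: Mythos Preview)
Your sketch correctly identifies that the crux is the sign bookkeeping, but the technical content needed to carry it out is missing, and the ``decisive lemma'' you propose at the end is essentially the theorem itself: if after conjugating to $Q_1=Q_2=Q$ one has $\delta_1\delta_2^{-1}=\epsilon_1\epsilon_2^{-1}$, then $\epsilon_1\delta_1=\epsilon_2\delta_2$ and the two twisted vertices already coincide, so no induction would even be needed. (Nor is it a priori clear that $Q_1$ and $Q_2$ can be conjugated to coincide; this too requires argument.) The character-triple reduction you propose for the $T=G$ case is likewise unproved; tracking $\pi$-factored structure and twisted vertices through such an isomorphism is not routine, and you yourself flag it as ``the hard part'' without indicating how to do it.

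The paper sidesteps both difficulties. First, it takes $N\in\mathcal{N}$ \emph{maximal} with the constituents of $\chi_N$ $\pi$-factored, rather than $N=N_1$; this forces $G_\theta<G$ whenever $\chi$ is not itself $\pi$-factored, so the $T=G$ branch never arises and no character-triple machinery is needed. Second, the sign tracking is done by an explicit replacement lemma (Lemma~\ref{replace}): if $\psi\in\Irr(U)$ is $\pi$-factored with $\psi^G=\chi$, the constituents of $\chi_N$ are $\pi$-factored, and $|NU:U|$ is a $\pi$-number, then $\xi=\psi^{NU}$ is again $\pi$-factored with $(\xi_{\pi'})_U=\delta_{(NU,U)}\psi_{\pi'}$. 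Combined with the transitivity $\delta_{(NU,Q)}=(\delta_{(NU,U)})_Q\,\delta_{(U,Q)}$, this shows that the given twisted vertex $(Q,\delta)$ is already a twisted vertex of $\xi$, so one may assume $N\le U$. A companion computation (Lemma~\ref{Clifford}) shows the Clifford correspondent $\psi_\theta\in\Irr(U_\theta)$ is $\pi$-factored with the analogous sign relation, yielding a common twisted vertex for $\psi$ and $\chi_\theta$; finally Corollary~\ref{simple} (any $\pi$-factored character of $\pi$-degree has all its twisted vertices conjugate), applied inside $U$ to $\psi$, conjugates $(Q,\delta)$ to that common vertex. Thus the paper replaces your hoped-for global cancellation lemma and character-triple step by two local identities relating $\pi'$-special factors to $\pi$-standard sign characters, each proved directly.
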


This gives a positive answer to a question proposed by Cossey and Lewis
of whether or not the nilpotence assumption on the $\pi$-chain $\mathcal N$ appeared in
Theorem 5.4 of \cite{C2010} or Theorem 4.2 of \cite{CL2011} is really necessary.
We remark that the linear twisted vertices in Theorem A definitely exist
(see  Corollary \ref{basic} below).

Using Navarro vertices, we can present another condition sufficient to guarantee the uniqueness for our linear twisted vertices.

\begin{thmA}
Let $G$ be $\pi$-separable with $2\notin\pi$, and let $\chi\in\Irr(G)$ be a $\pi$-lift.
If $\chi$ has a linear Navarro vertex, then all of the linear twisted vertex pairs for $\chi$ are conjugate in $G$.
\end{thmA}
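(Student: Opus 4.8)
The plan is to deduce Theorem B from Theorem A by showing that the two sets of hypotheses describe the same situation. Concretely, I would first prove the following implication: \emph{if $\chi\in\Irr(G)$ is a $\pi$-lift of the $\pi$-separable group $G$ and $\chi$ has a linear Navarro vertex, then $\chi$ is an $\mathcal N$-lift for some $\pi$-chain $\mathcal N$ of $G$}. Granting this, Theorem A applies directly and gives the conclusion, since the linear twisted vertices occurring in the two statements are literally the same objects. (Conversely, an $\mathcal N$-lift always has a linear Navarro vertex, which should drop out of the linearity of the inductive vertices in Theorem~1.2 of \cite{CL2011}, so the hypotheses of Theorems~A and~B are in fact equivalent; this is why Theorem~B is best viewed as a reformulation convenient for applications rather than a strict strengthening.)

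To prove the implication I would induct on $|G|$. \emph{Base case: $\chi$ is $\pi$-factored.} Then $(G,\chi)$ is the normal nucleus of $\chi$, so the Navarro vertices of $\chi$ are the pairs $(Q,(\chi_{\pi'})_Q)$ with $Q\in\Hall_{\pi'}(G)$; linearity of such a pair forces $\chi_{\pi'}\in\Lin(G)$, hence $\chi=\chi_\pi\lambda$ with $\lambda\in\Lin(G)$. For any $N\trianglelefteq G$, every irreducible constituent of $\chi_N$ has the form $\mu\lambda_N$, where $\mu$ is an irreducible ($\pi$-special) constituent of $(\chi_\pi)_N$. Since $\pi$-special characters restrict to irreducible $\pi$-partial characters, and multiplication by $\lambda_N$ permutes $\I(N)$, each $\mu\lambda_N$ is a $\pi$-lift of $N$; thus $\chi$ is an $\mathcal N$-lift for any chief series $\mathcal N$ of $G$. \emph{Inductive step: $\chi$ is not $\pi$-factored.} Then some $\chi_M$ with $M\trianglelefteq G$ is nonhomogeneous (otherwise $\chi$ would be quasi-primitive, hence $\pi$-factored), and $M$ may be chosen to be a chief factor of $G$. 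Pick $\theta\in\Irr(M)$ under $\chi$, let $T=I_G(\theta)<G$, and let $\hat\chi\in\Irr(T\mid\theta)$ be the Clifford correspondent, so $\hat\chi^G=\chi$. Since $(\hat\chi^0)^G=\chi^0$ is irreducible and $\hat\chi^0$ is a nonnegative integral combination of irreducible $\pi$-partial characters of $T$, the $\pi$-partial character $\hat\chi^0$ must itself be irreducible, i.e.\ $\hat\chi$ is a $\pi$-lift of $T$; moreover $\hat\chi$ inherits a linear Navarro vertex, because the Navarro vertices of $\chi$ and of its Clifford correspondent over a normal subgroup agree up to conjugacy (\cite{N2002}; see also \cite{C2010}). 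By induction $\hat\chi$ is an $\mathcal N'$-lift for some $\pi$-chain $\mathcal N'$ of $T$, and it remains to assemble a $\pi$-chain of $G$ witnessing that $\chi$ is an $\mathcal N$-lift.

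This last step is where the real work lies and where I expect the main obstacle: one must splice a chief series below $M$ with (the normal closures in $G$ of) the terms of $\mathcal N'$ and verify that for each resulting normal subgroup $N$ of $G$, every irreducible constituent of $\chi_N=(\hat\chi^G)_N$ is a $\pi$-lift of $N$. This calls for a Mackey/Clifford analysis expressing the constituents of $\chi_N$ through restrictions of $\hat\chi$ to subgroups of $T$, together with a case split on the chief factor $M$: if $M$ is a $\pi$-group the constituents of $\chi_M$ are automatically $\pi$-special, hence $\pi$-lifts; if $M$ is a nonabelian $\pi'$-group, the linear-Navarro-vertex hypothesis forces the constituents of $\chi_M$ to be linear and hence trivial (as $M=M'$), so $M\le\Ker\chi$ and one passes to $G/M$; and if $M$ is an abelian $\pi'$-group the relevant constituents are linear, hence $\pi$-lifts. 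The delicate point throughout is that the sign characters entering the twisted vertices when $2\notin\pi$ — precisely the reason the ordinary vertex pairs of a $\pi$-lift can fail to be conjugate — must be shown compatible with this splicing, which is exactly the kind of bookkeeping that the twisted-vertex formalism of Section~2 and the machinery behind Theorem~A are built to absorb, so I would reuse those lemmas rather than reprove them. An alternative route, avoiding the explicit reduction to Theorem~A, is to run an induction parallel to the proof of Theorem~A: carry the linear Navarro vertex along the Clifford descent and invoke the uniqueness of Navarro vertices up to conjugacy to identify the single conjugacy class to which all linear twisted vertices must belong. The essential difficulty is the same in both approaches.
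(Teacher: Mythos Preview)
Your primary strategy---reducing Theorem~B to Theorem~A by proving that a $\pi$-lift with a linear Navarro vertex must be an $\mathcal N$-lift for some $\pi$-chain---is not what the paper does, and the proposal has a genuine gap. The paper proves Theorems~A and~B \emph{simultaneously} by a single induction (Theorem~3.4); the only difference between the two cases is which normal subgroup $N$ one descends through. In case~(b) the paper takes the specific $N$ arising in the Navarro nucleus construction (the maximal normal subgroup with $\pi$-factored constituents under $\chi$), not an arbitrary minimal normal subgroup. The engine of the argument is the replacement lemma (Lemma~3.1): for any $U\le G$ with $\psi\in\Irr(U)$ $\pi$-factored and $\psi^G=\chi$, one may replace $(U,\psi)$ by $(NU,\psi^{NU})$ and the $\pi'$-factor changes by exactly the sign $\delta_{(NU,U)}$. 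This is precisely why the \emph{twisted} vertex is invariant under the move $U\rightsquigarrow NU$. After that, Lemma~3.3 and Corollary~3.2 pass from $NU$ down to $G_\theta$ and match the given twisted vertex with one for $\chi_\theta$, to which induction applies.

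In your proposal the ``splicing'' step---manufacturing a $\pi$-chain of $G$ from a $\pi$-chain $\mathcal N'$ of $T=G_\theta$---is admitted to be the crux and is not carried out. Your case analysis on the minimal normal subgroup $M$ handles only the bottom chief factor; it says nothing about the layers above $M$. Normal closures in $G$ of the members of $\mathcal N'$ need not form a $\pi$-chain of $G$, and even if they did there is no mechanism offered for showing that the constituents of $\chi$ on those closures are $\pi$-lifts. The assertion that the hypotheses of Theorems~A and~B are equivalent is not established here and is not claimed in the paper; indeed the paper invokes Theorem~B (not Theorem~A) for the set $\N(G)$, which suggests the authors do not regard ``linear Navarro vertex $\Rightarrow$ $\mathcal N$-lift'' as available. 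Your ``alternative route'' at the end---carry the linear Navarro vertex through a Clifford descent over the Navarro normal subgroup and argue directly---is essentially the paper's proof, but to make it go you still need the replacement lemma to control how the $\pi'$-factor and the sign character interact when you enlarge $U$ to $NU$; that lemma, not the reduction to Theorem~A, is the missing ingredient.
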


In \cite{WJ2022}, we will use Theorem B to study a conjecture of Cossey \cite{C2007}
on the number of lifts of a given irreducible Brauer character in characteristic two.
Observe that the assumption on Navarro vertices in Theorem B is trivially satisfied
if either $\chi$ has $\pi$-degree or $G$ has an abelian Hall $\pi'$-subgroup.

We now present an immediate consequence of both Theorem A and Theorem B.
Recall that in order to obtain canonical lifts for $\I(G)$ in a $\pi$-separable group $G$,
several important subsets of $\Irr(G)$ were constructed,
such as $\B(G)$, $\D(G)$, $\N(G)$ and $\B(G,{\mathcal N})$.
(See \cite{I1984, I1996, N2002, L2006} in turn for the relevant definitions and properties.)
We mention that all characters in $\N(G)$ satisfy the hypothesis in Theorem B,
and each member of $\B(G)$,
$\D(G)$ or $\B(G,{\mathcal N})$ is an  $\mathcal N$-lift for any $\pi$-chain $\mathcal{N}$ of $G$, so that Theorem A applies.

\begin{corA}
Let $\chi\in\Irr(G)$, where $G$ is a $\pi$-separable group with $2\notin\pi$.
If $\chi$ is any member of $\B(G)$, $\D(G)$, $\N(G)$, or $\B(G,\mathcal{N})$
for a $\pi$-chain $\mathcal{N}$ of $G$,
then $\chi$ has a linear vertex pair, and all of the linear twisted vertices for $\chi$ are conjugate in $G$.
\end{corA}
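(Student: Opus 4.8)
The plan is to obtain this as a straightforward consequence of Theorem A and Theorem B, the only genuine task being to recall, for each of the four families of characters, the structural property that feeds into the appropriate theorem. I would begin by observing that every $\chi$ lying in $\B(G)$, $\D(G)$, $\N(G)$, or $\B(G,\mathcal N)$ is a $\pi$-lift: this is built into each of the constructions of \cite{I1984}, \cite{I1996}, \cite{N2002}, and \cite{L2006}, since in each case restriction to $G^0$ furnishes a canonical bijection of the family onto $\I(G)$, so that $\chi^0\in\I(G)$. Thus the ``$\pi$-lift'' hypothesis common to Theorems A and B is satisfied in all four cases.

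Next I would settle the existence of a linear vertex pair. For $\chi\in\N(G)$ this is immediate: by construction $\chi$ has a Navarro vertex $(Q,\delta)$, and the fact that this Navarro vertex is linear is exactly the statement that members of $\N(G)$ satisfy the hypothesis of Theorem B; a linear Navarro vertex is in particular a linear vertex pair. For the remaining families I would invoke Corollary \ref{basic}, which guarantees that the $\pi$-lift $\chi$ admits a linear twisted vertex $(Q,\epsilon\delta)$; since $\epsilon$ is a permutation sign character and hence linear, $\delta=(\epsilon\delta)\epsilon^{-1}$ is a product of linear characters, so $(Q,\delta)$ is a linear vertex pair for $\chi$.

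For the uniqueness assertion I would split into two cases. If $\chi\in\B(G)$, $\D(G)$, or $\B(G,\mathcal N)$, then $\chi$ is an $\mathcal N$-lift for \emph{every} $\pi$-chain $\mathcal N$ of $G$: for $\B(G)$ and $\D(G)$ this follows from the behaviour of $B_\pi$- and $D_\pi$-characters with respect to normal subgroups (\cite{I1984}, \cite{I1996}), as recorded in \cite{CL2011}, and for $\B(G,\mathcal N)$ it is part of Lewis's construction in \cite{L2006}; hence Theorem A applies and all linear twisted vertices for $\chi$ are $G$-conjugate. If instead $\chi\in\N(G)$, then as noted above $\chi$ is a $\pi$-lift possessing a linear Navarro vertex, so Theorem B applies and again all linear twisted vertices for $\chi$ are $G$-conjugate. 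Combining the two cases with the existence statement of the previous paragraph yields the corollary.

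The hard part here is not any new argument — Theorems A and B do all the real work — but rather the bookkeeping: one must check that the literature cited for each family actually delivers the precise property needed (being an $\mathcal N$-lift for all $\pi$-chains, respectively carrying a linear Navarro vertex), with the standing assumption $2\notin\pi$. Once those citations are pinned down, the proof is a one-line case division.
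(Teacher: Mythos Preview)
Your proposal is correct and follows exactly the route the paper takes: the paper's ``proof'' is simply the sentence preceding the corollary, which records that members of $\N(G)$ satisfy the hypothesis of Theorem~B while members of $\B(G)$, $\D(G)$, and $\B(G,\mathcal N)$ are $\mathcal N$-lifts for every $\pi$-chain $\mathcal N$ and hence fall under Theorem~A (with existence coming from Corollary~\ref{basic}). Your write-up merely expands this into an explicit case division and makes the passage from ``linear twisted vertex'' to ``linear vertex pair'' explicit via the sign character; nothing further is needed.
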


Most of our notation and terminology can be found in \cite{I1976} and \cite{I2018}.
In particular, if $N$ is a normal subgroup of a finite group $G$ and $\theta\in\Irr(N)$,
we always write $G_\theta$ for the inertia group of $\theta$ in $G$
and $\chi_\theta$ for the Clifford correspondent with respect to $\theta$
of any $\chi\in\Irr(G)$ lying over $\theta$.
Also, if a character $\chi$ is $\pi$-factored, that is, if $\chi=\alpha\beta$,
where $\alpha$ is $\pi$-special and $\beta$ is $\pi'$-special,
then $\alpha$ and $\beta$ are uniquely determined by $\chi$ by a theorem of Gajendragadkar (see Theorem 2.2 of \cite{I2018}),
so for notational convenience, we often use $\chi_{\pi}$ and $\chi_{\pi'}$ to denote the $\pi$-special factor $\alpha$ and the $\pi'$-special factor $\beta$ of $\chi$, respectively.

%%%----------------------------------------------
\section{Preliminaries}

We begin with the definition of twisted vertices mentioned in the introduction.

\begin{defn}
Let $\chi\in\Irr(G)$, where $G$ is a $\pi$-separable group with $2\notin\pi$,
and suppose that $Q$ is a $\pi'$-subgroup of $G$ and $\delta\in\Irr(Q)$.
We say that the pair $(Q,\delta)$ is a {\bf twisted vertex} for $\chi$
if there exists a subgroup $U$ of $G$ such that $Q$ is a Hall $\pi'$-subgroup of $U$
and $\delta=(\det((1_Q)^U))_Q(\psi_{\pi'})_Q$,
where $\psi\in\Irr(U)$ is $\pi$-factored with $\psi^G=\chi$.
If $(Q,\delta)$ is a twisted vertex pair for $\chi$ and $\delta$ is linear,
then  $(Q,\delta)$ is called a {\bf linear twisted vertex pair} for $\chi$.
\end{defn}

It is easy to see that $\epsilon=(\det((1_Q)^U))_Q$ is the permutation sign character of the action of $Q$ on the right cosets of $Q$ in $U$, so that $\epsilon$ is a sign character,
which means that its values are $\pm 1$.
For any subgroup $H$ of $G$, Isaacs \cite{I1986} defined the {\bf $\pi$-standard sign character}
$\delta_{(G,H)}$  (see also \cite{I2018}), and for convenience we list some relevant properties as follows.

\begin{lem}\label{sign}
Let $G$ be a $\pi$-separable group, where $2\notin\pi$.

{\rm (1)} If $Q$ is a Hall $\pi'$-subgroup of $G$, then $\delta_{(G,Q)}=(\det((1_Q)^G))_Q$.

{\rm (2)} If $Y\le X\le G$, then $\delta_{(G,Y)}=(\delta_{(G,X)})_Y\delta_{(X,Y)}$.

{\rm (3)} If $N\NM G$, then $\delta_{(G,N)}=1_N$, the principal character of $N$.
\end{lem}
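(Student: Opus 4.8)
The plan is to recall Isaacs' $\pi$-standard sign character in the following computable form (see \cite{I1986, I2018}) and to read off (1)--(3) from it. Fix a chief series $1=G_0<G_1<\cdots<G_n=G$ of $G$ and let $G$ act on the right coset space $\Omega=H\backslash G$; for $0\le i\le n$ let $\Omega_i=H\backslash G/G_i$ be the set of $(H,G_i)$-double cosets, equivalently the set of $G_i$-orbits on $\Omega$, so that
\[
\Omega=\Omega_0\twoheadrightarrow\Omega_1\twoheadrightarrow\cdots\twoheadrightarrow\Omega_n=\{*\}
\]
is a tower of $H$-sets in which each fibre of $\Omega_{i-1}\to\Omega_i$ is a transitive $G_i/G_{i-1}$-set. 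For $h\in H$ set $\epsilon_i(h)=\mathrm{sgn}(h|_{\Omega_{i-1}})\,\mathrm{sgn}(h|_{\Omega_i})$; this is a $\pm1$-valued linear character of $H$, and $\delta_{(G,H)}=\prod_i\epsilon_i$, the product running over those $i$ with $G_i/G_{i-1}$ a $\pi$-group (one checks this is independent of the chosen series). Observe that the full product $\prod_{i=1}^n\epsilon_i$ telescopes to the sign of the permutation of $\Omega$ effected by right multiplication by $h$; that is, it equals $(\det((1_H)^G))_H$.

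Statement (3) is then immediate: if $N\NM G$ then $N$ fixes every coset in $N\backslash G$ under right multiplication, since $Ngn=Ng$ for all $g\in G$ and $n\in N$; hence $N$ acts trivially on every $\Omega_i$, each $\epsilon_i$ is trivial, and $\delta_{(G,N)}=1_N$.

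For (1), let $Q$ be a Hall $\pi'$-subgroup of $G$. Since $G_i\NM G$, the $G_i$-orbit of a coset $Qg\in\Omega$ consists of the right $Q$-cosets contained in $QgG_i=(QG_i)g$, and so has size $[G_i:Q\cap G_i]=|G_i|_\pi$; hence a fibre of $\Omega_{i-1}\to\Omega_i$ has size $|G_i/G_{i-1}|_\pi$, which equals $1$ exactly when $G_i/G_{i-1}$ is a $\pi'$-group. At those levels $\Omega_{i-1}\to\Omega_i$ is a bijection and $\epsilon_i=1$, so $\delta_{(G,Q)}=\prod_i\epsilon_i=\prod_{i=1}^n\epsilon_i$, which by the first paragraph is $(\det((1_Q)^G))_Q$.

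Part (2) is the substantive one. Given $Y\le X\le G$, fix a chief series $\{G_i\}$ of $G$: it computes $\delta_{(G,Y)}$ and $\delta_{(G,X)}$ directly, while $\{G_i\cap X\}$ is a chain of normal subgroups of $X$ with $\pi$- or $\pi'$-quotients which, by the invariance of $\delta$ under the choice of series, serves to compute $\delta_{(X,Y)}$. It then suffices to prove, for every $M\NM G$, the single-level identity
\[
\mathrm{sgn}(y|_{Y\backslash G/M})=\mathrm{sgn}(y|_{X\backslash G/M})\cdot\mathrm{sgn}(y|_{Y\backslash X/(M\cap X)})\qquad(y\in Y),
\]
and then to multiply it over the $\pi$-levels $M=G_{i-1},G_i$. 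For this one uses the $Y$-equivariant surjection $Y\backslash G/M\to X\backslash G/M$ whose fibre over a double coset $XgM$ is in bijection with $Y\backslash X/(M\cap X)$: the base yields the factor $\mathrm{sgn}(y|_{X\backslash G/M})$ and the fibres yield $\mathrm{sgn}(y|_{Y\backslash X/(M\cap X)})$, the delicate point being that the $Y$-action on the fibre over $XgM$ is conjugated by $g$ and one must check this does not alter the resulting sign. I expect precisely this bookkeeping --- reconciling the three double-coset filtrations and verifying the conjugation-invariance of the fibre signs --- to be the main obstacle; it is carried out in \cite{I1986} and reviewed in \cite{I2018}.
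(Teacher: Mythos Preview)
The paper does not prove this lemma at all: it simply cites Isaacs \cite{I1986,I2018} for all three parts. Your arguments for (1) and (3) are correct and actually go beyond the paper by spelling out why the cited results hold from the orbit-space definition of $\delta_{(G,H)}$.

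Your sketch for (2), however, rests on a claim that is false. The ``single-level identity''
\[
\mathrm{sgn}(y\mid Y\backslash G/M)=\mathrm{sgn}(y\mid X\backslash G/M)\cdot\mathrm{sgn}(y\mid Y\backslash X/(M\cap X))
\]
does not hold for arbitrary $M\NM G$. Take $G=C_3\times S_4$ with $\pi=\{3\}$ and chief series $1<C_3<C_3\times V_4<C_3\times A_4<G$; let $X=1\times S_3$, $Y=1\times\langle(12)\rangle$, $y=(1,(12))$, and $M=G_0=1$ (which \emph{is} a boundary of a $\pi$-level here, since $G_1/G_0\cong C_3$). A direct count gives $\mathrm{sgn}(y\mid Y\backslash G)=-1$, $\mathrm{sgn}(y\mid X\backslash G)=-1$, and $\mathrm{sgn}(y\mid Y\backslash X)=-1$, so the identity reads $-1=(-1)(-1)$. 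The point you flag as ``delicate'' is more serious than a conjugation ambiguity: the fibres of $Y\backslash G/M\to X\backslash G/M$ are not $Y$-stable in general, and the wreath-product sign formula produces both an exponent $|Y\backslash X/(M\cap X)|$ on the base sign and a transfer-type product over coset representatives, neither of which reduces to your right-hand side. What \emph{is} true (and what Isaacs proves) is the identity at the level of each $\epsilon_i$, i.e.\ after multiplying two consecutive $\sigma$'s; the error terms at levels $G_{i-1}$ and $G_i$ cancel there, but not individually. So your reduction ``prove it for every $M$ and then multiply'' cannot work as stated; you would need to argue directly with the $\epsilon_i$'s, which is a different (and longer) computation than the one you outline.
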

\begin{proof}
(1) is Lemma 2.10 of \cite{I1986},
and for (2) and (3), see Theorem 2.5 of that paper or Lemma 2.33 of \cite{I2018}.
\end{proof}

By Lemma \ref{sign}(1),
we can modify Definition 2.1 by replacing the permutation sign character $(\det((1_Q)^U))_Q$ with the $\pi$-standard sign character $\delta_{(U,Q)}$,
and obtain a more convenient definition of twisted vertices.
From now on, we will always use this new definition.

We need some elementary properties of $\mathcal{N}$-lifts,
which appeared implicitly in \cite{CL2011}.

\begin{lem}\label{pi-deg}
Let $\chi\in\Irr(G)$ be an $\mathcal{N}$-lift,
where $\mathcal N$ is a $\pi$-chain of a $\pi$-separable group $G$,
and let $N\in\mathcal{N}$.
Suppose that $\theta\in\Irr(N)$ is a constituent of $\chi_N$. Then the following hold.

{\rm (1)} $\chi_\theta\in\Irr(G_\theta)$ is an $\mathcal{N}_\theta$-lift, where $\mathcal{N}_\theta=\{M\cap G_\theta|M\in\mathcal{N}\}$ is a $\pi$-chain for $G_\theta$.

{\rm (2)} If $\theta$ is $\pi$-factored, then $\theta$ has $\pi$-degree.
\end{lem}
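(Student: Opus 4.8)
The plan is to prove both parts by exploiting the structure of the $\pi$-chain $\mathcal N$ together with standard Clifford theory. For part (1), I would first check that $\mathcal N_\theta=\{M\cap G_\theta\mid M\in\mathcal N\}$ really is a $\pi$-chain of $G_\theta$. Each $M\cap G_\theta$ is normal in $G_\theta$ since $M\NM G$, and for consecutive terms $M_{i-1}\le M_i$ of $\mathcal N$ the quotient $(M_i\cap G_\theta)/(M_{i-1}\cap G_\theta)$ embeds in $M_i/M_{i-1}$, hence is a $\pi$-group or a $\pi'$-group accordingly; after deleting repetitions this yields a genuine $\pi$-chain. Then I would show $\chi_\theta$ is an $\mathcal N_\theta$-lift. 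Let $M\in\mathcal N$ and set $L=M\cap G_\theta$; I must check every irreducible constituent $\varphi$ of $(\chi_\theta)_L$ is a $\pi$-lift of $L$. Here the key point is to relate constituents of $(\chi_\theta)_L$ to constituents of $\chi_M$ via the Clifford correspondence, distinguishing the two cases: when $N\le M$ one compares through $N$, and when $M\le N$ one works inside $N$. In either situation, induction on $|G:N|$ or on the length of the chain reduces the claim to the hypothesis that $\chi$ is an $\mathcal N$-lift, using that the restriction of a $\pi$-lift to a normal subgroup has all constituents $\pi$-lifts (a basic property of $\pi$-partial characters from \cite{I2018}), and that $\pi$-liftness is inherited by Clifford correspondents over normal subgroups lying in both $M$ and $G_\theta$.

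For part (2), suppose $\theta\in\Irr(N)$ is $\pi$-factored, say $\theta=\theta_\pi\theta_{\pi'}$. Since $\theta$ is a constituent of $\chi_N$ and $\chi$ is an $\mathcal N$-lift, $\theta$ is a $\pi$-lift, so $\theta^0\in\I(N)$ is an irreducible $\pi$-partial character. I would then use the fact that for a $\pi$-factored character the $\pi'$-special factor $\theta_{\pi'}$ must be linear whenever $\theta^0$ is irreducible: indeed $(\theta_{\pi'})^0$ is a $\pi'$-partial character of degree $\theta_{\pi'}(1)$, but $\theta^0=(\theta_\pi)^0(\theta_{\pi'})^0$ and $(\theta_\pi)^0$ already has the $\pi$-part of the degree, so irreducibility of $\theta^0$ as a $\pi$-partial character forces $\theta_{\pi'}(1)$ to be a $\pi$-number; being a $\pi'$-number as well (since $\theta_{\pi'}$ is $\pi'$-special), it equals $1$. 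Hence $\theta(1)=\theta_\pi(1)$ is a $\pi$-number, i.e.\ $\theta$ has $\pi$-degree.

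The main obstacle I anticipate is the bookkeeping in part (1): correctly matching up the inertia subgroups and Clifford correspondents as one passes along the chain $\mathcal N$, particularly handling terms $M\in\mathcal N$ on both sides of $N$ simultaneously and ensuring the inductive setup is uniform. The representation-theoretic input is routine, but one must be careful that $\mathcal N_\theta$ interacts well with the Clifford correspondence at every level; once that is set up cleanly, part (2) is a short computation with $\pi$-partial characters. Throughout I would lean on Lemma \ref{sign} only implicitly (it is not needed here) and on the Gajendragadkar factorization to make sense of $\theta_\pi$ and $\theta_{\pi'}$.
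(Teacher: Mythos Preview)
Your outline for part~(1) is essentially the paper's argument: after checking that $\mathcal N_\theta$ is a $\pi$-chain, one splits into the cases $M\le N$ and $N\le M$ and uses the Clifford correspondence. No induction is actually needed; the paper handles both cases directly. When $M\le N$ one has $M\cap G_\theta=M$, and any $\eta\in\Irr(M)$ under $\chi_\theta$ also lies under $\chi$, hence is a $\pi$-lift. When $N\le M$ one has $M\cap G_\theta=M_\theta$, so $\eta$ lies over $\theta$, $\eta^M\in\Irr(M)$ lies under $\chi$, and then $(\eta^0)^M=(\eta^M)^0\in\I(M)$ forces $\eta^0$ to be irreducible.

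Part~(2), however, has a real gap. The phrase ``$(\theta_{\pi'})^0$ is a $\pi'$-partial character'' is a slip (it is a $\pi$-partial character), and more seriously the inference ``irreducibility of $\theta^0$ forces $\theta_{\pi'}(1)$ to be a $\pi$-number'' is unjustified: irreducible $\pi$-partial characters need not have $\pi$-degree in general, so nothing in the factorisation $\theta^0=(\theta_\pi)^0(\theta_{\pi'})^0$ by itself forces the $\pi'$-part to be linear. Your argument uses only that $\theta$ is a $\pi$-lift and discards the rest of the $\mathcal N$-lift hypothesis; the paper's proof genuinely uses more. It proceeds by induction on $|N|$: pick $M\in\mathcal N$ just below $N$ and a constituent $\eta$ of $\theta_M$, which is $\pi$-factored and, crucially, is itself a $\pi$-lift \emph{because $\chi$ is an $\mathcal N$-lift at the level $M$}. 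By induction $\eta(1)$ is a $\pi$-number. If $N/M$ is a $\pi$-group one is done since $\theta(1)$ divides $|N{:}M|\,\eta(1)$. If $N/M$ is a $\pi'$-group, the inductive hypothesis makes $\eta_{\pi'}$ linear, so $\eta^0=(\eta_\pi)^0$; since $(\theta_\pi)_M=\eta_\pi$ is $N$-invariant, $\eta^0$ is $N$-invariant, and Clifford's theorem for $\pi$-partial characters gives $(\theta^0)_M=\eta^0$, whence $\theta(1)=\eta(1)$. Without the $\mathcal N$-lift hypothesis at the level $M$, the key fact that $\eta$ is a $\pi$-lift is not available, and your degree-counting shortcut does not fill this gap.
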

\begin{proof}
(1) By definition, it is easy to see that $\mathcal{N}_\theta$ is a $\pi$-chain for $G_\theta$.
To prove that $\chi_\theta$ is an $\mathcal{N}_\theta$-lift,
we fix an arbitrary $M\in\mathcal N$ and $\eta\in\Irr(M\cap G_\theta)$ lying under $\chi_\theta$,
and we want to show that $\eta$ is a $\pi$-lift.
In this case, we have two possibilities: either $M\le N$ or $N\le M$.

If $M\le N$, then $M\cap G_\theta=M$,
and since $\chi$ is an $\mathcal N$-lift and $\eta\in\Irr(M)$ lies under $\chi$, we know that $\eta$ is a $\pi$-lift. We may therefore assume that $M\ge N$,
and then $M\cap G_\theta=M_\theta$ is the inertia of $\theta$ in $M$.
Now $\eta$ lies over $\theta$, so the Clifford correspondence implies that $\eta^M$ is irreducible,
which clearly lies under $\chi$. It follows that $\eta^M$ is a $\pi$-lift,
and thus $(\eta^0)^M=(\eta^M)^0\in\I(M)$.
This shows that $\eta^0$ is irreducible, i.e., $\eta$ is also a $\pi$-lift, as wanted.

(2) We proceed by induction on $|N|$.
If $N=1$, there is nothing to prove, so we can assume that $N>1$,
and then there exists $M\in\mathcal N$ such that
$M<N$ with $N/M$ either a $\pi$-group or a $\pi'$-group.
Let $\eta\in\Irr(M)$ lie under $\theta$. Then $\eta$ is also $\pi$-factored,
and the inductive hypothesis guarantees that $\eta(1)$ is a $\pi$-number.
Since $\theta(1)$ divides $|N:M|\eta(1)$, the result follows in the case where $N/M$ is a $\pi$-group.

Now suppose that $N/M$ is a $\pi'$-group.
Then, since both $\theta$ and $\eta$ are $\pi$-factored, we have $(\theta_\pi)_M=\eta_\pi$ by Clifford's theorem, and thus $\eta_\pi$ is invariant in $N$.
Note that both $\theta$ and $\eta$ are $\pi$-lifts because $\chi$ is assumed to be an $\mathcal N$-lift, so $\eta^0\in\I(M)$ lies under $\theta^0\in\I(N)$.
But $\eta(1)$ is a $\pi$-number, so $\eta_{\pi'}$ is linear,
and we have $\eta^0=(\eta_{\pi})^0(\eta_{\pi'})^0=(\eta_{\pi})^0$. It follows that $\eta^0$ is $N$-invariant, and by Clifford's theorem for $\pi$-partial characters (see Corollary 5.7 of \cite{I2018}), we deduce that $(\theta^0)_M=\eta^0$.
This shows that $\theta(1)=\eta(1)$, which is a $\pi$-number, as required.
\end{proof}

Although the following result is known (see Theorem 1 or Lemma 2.4 of \cite{L2010}),
as a simple application of Lemma \ref{pi-deg},
we present an alternative proof for completeness.

\begin{cor}\label{basic}
Let  $\mathcal N$ be a $\pi$-chain of a $\pi$-separable group $G$,
and let $\chi\in\Irr(G)$ be an $\mathcal{N}$-lift.
Then $\chi=\psi^G$ for some $\pi$-factored character $\psi$ of a subgroup $U\le G$,
where $\psi(1)$ is a $\pi$-number. In particular, $\chi$ has a linear twisted vertex.
\end{cor}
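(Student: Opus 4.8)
The plan is to build the pair $(U,\psi)$ by induction along the $\pi$-chain $\mathcal N$, reducing at each step via the Clifford correspondence. First I would handle the trivial case $G=1$, and then pick the smallest nontrivial term $N=N_1\in\mathcal N$, so that $N$ is either a $\pi$-group or a $\pi'$-group. Choose an irreducible constituent $\theta$ of $\chi_N$; by Lemma \ref{pi-deg}(1), the Clifford correspondent $\chi_\theta\in\Irr(G_\theta)$ is an $\mathcal N_\theta$-lift for the $\pi$-chain $\mathcal N_\theta$ of $G_\theta$. If $G_\theta<G$, induction gives a $\pi$-factored $\psi$ on some $U\le G_\theta\le G$ with $\psi(1)$ a $\pi$-number and $\psi^{G_\theta}=\chi_\theta$; then $\psi^G=(\chi_\theta)^G=\chi$ by the transitivity of induction and the Clifford correspondence, and we are done. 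So the real work is the case $G_\theta=G$, i.e. $\theta$ is $G$-invariant.

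In that case I would split according to the nature of $N$. If $N$ is a $\pi'$-group, then $\theta$ is an invariant irreducible $\pi'$-partial character (indeed an ordinary character, being a $\pi$-lift on a $\pi'$-group), and I would invoke the standard theory of $\pi$-lifts over invariant characters — character triple / going-up arguments as in \cite{CL2011} and \cite{I2018} — to either produce a proper subgroup over which $\chi$ induces irreducibly from a $\pi$-factored character, or to conclude $\chi$ itself is $\pi$-factored with the required degree. If $N$ is a $\pi$-group, then $\theta$ is $\pi$-special (a $\pi$-lift on a $\pi$-group), it is $G$-invariant, and here one uses that $\chi$ restricted to $N$ is a multiple of a $\pi$-special character to pass to the quotient: the point is that an invariant $\pi$-special character of a normal $\pi$-subgroup behaves well, and $\chi$ ``descends'' appropriately so that one can apply induction to $G/N$ with the induced $\pi$-chain $\{N_i/N : N_i\in\mathcal N, N_i\ge N\}$, which is still a $\pi$-chain and for which the relevant quotient character is still an $\mathcal N$-lift. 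Lifting a $\pi$-factored character through $N$ (a $\pi$-group) keeps it $\pi$-factored with $\pi$-power degree, so the conclusion survives.

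Granting $\chi=\psi^G$ with $\psi$ $\pi$-factored on $U$ and $\psi(1)$ a $\pi$-number, the ``in particular'' is immediate: let $Q$ be a Hall $\pi'$-subgroup of $U$, and set $\delta=\delta_{(U,Q)}(\psi_{\pi'})_Q$. Since $\psi(1)=\psi_\pi(1)\psi_{\pi'}(1)$ is a $\pi$-number, the $\pi'$-special factor $\psi_{\pi'}$ has $\pi$-number degree, so $\psi_{\pi'}(1)$ is both a $\pi$-number and a $\pi'$-number, hence $\psi_{\pi'}$ is linear; as $\delta_{(U,Q)}$ is a linear (sign) character of $Q$, the product $\delta$ is linear, and $(Q,\delta)$ is by the (revised) Definition 2.1 a linear twisted vertex for $\chi$.

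The main obstacle I expect is the $G$-invariant case, precisely the passage across a $\pi'$-layer $N$ with $\theta$ invariant: unlike the $\pi$-group case one cannot simply quotient out, and one must instead extract a proper subgroup by a delicate Clifford-theoretic / character-triple argument exploiting that $\theta$, being a $\pi$-lift of $\pi$-degree, is linear on a $\pi'$-group — this is essentially the technical heart already present in \cite{CL2011} and \cite{L2010}, and the proof here is ``for completeness'' precisely because reproducing this step cleanly requires care with the interaction between $\pi$-factorization, invariance, and induction.
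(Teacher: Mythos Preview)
Your overall inductive framework and the ``in particular'' paragraph are fine, but the heart of your argument---the $G$-invariant case---has a real gap, and the paper's proof avoids this case entirely by a different choice of $N$.

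You take $N=N_1$, the \emph{smallest} nontrivial term of $\mathcal N$, and then must confront the possibility that $\theta$ is $G$-invariant. In the $\pi$-group subcase you propose to ``apply induction to $G/N$ with the induced $\pi$-chain'' for ``the relevant quotient character''. But $\chi$ need not have $N$ in its kernel (only $\theta=1_N$ would give that), so there is no quotient character to which induction applies; a character-triple reduction would be needed instead, and you would then have to verify that the $\mathcal N$-lift hypothesis survives the isomorphism, which you do not address. The $\pi'$-subcase is likewise only sketched. You correctly identify this invariant case as ``the main obstacle'', but your proposal does not actually overcome it.

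The paper sidesteps the obstacle completely: instead of taking the bottom term of $\mathcal N$, it takes $N\in\mathcal N$ \emph{maximal} with the property that the irreducible constituents of $\chi_N$ are $\pi$-factored. If $\chi$ itself is $\pi$-factored, Lemma~\ref{pi-deg}(2) gives $\chi(1)$ a $\pi$-number and one takes $(U,\psi)=(G,\chi)$. Otherwise $N<G$, and if $\theta$ were $G$-invariant then so would be $\theta_\pi$ and $\theta_{\pi'}$; by Lemma~4.2 of \cite{I2018} every irreducible constituent of $\chi$ on the next term $M\in\mathcal N$ above $N$ would again be $\pi$-factored, contradicting maximality of $N$. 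Hence $G_\theta<G$ automatically, and the Clifford-correspondence induction you already wrote down finishes the proof with no further case analysis. Choosing $N$ maximal rather than minimal is exactly the missing idea.
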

\begin{proof}
Suppose first that $\chi$ is not $\pi$-factored.
Let $N\in\mathcal N$ be maximal such that the constituents of $\chi_N$ are $\pi$-factored,
and let $\theta\in\Irr(N)$ lie under $\chi$.
We claim that $G_\theta<G$.
To see this, note that $N<G$, so there exists $M\in\mathcal N$ such that $M/N>1$ is a $\pi$-group or a $\pi'$-group. If the $\pi$-factored character $\theta$ is $G$-invariant, then both $\theta_\pi$ and $\theta_{\pi'}$ are invariant in $G$, and by Lemma 4.2 of \cite{I2018},
every member of $\Irr(M|\theta)$ is $\pi$-factored.
In particular, each irreducible constituent $\eta$ of $\chi_M$ lying over $\theta$ is $\pi$-factored,
and this contradicts the maximality of $N$.
Thus $\theta$ cannot be invariant in $G$, as claimed.

Now we work by induction on $|G|$.
By Lemma \ref{pi-deg}(1) and the inductive hypothesis applied in $G_\theta$,
we have $\chi_\theta=\psi^{G_\theta}$, where $\psi$ is a $\pi$-factored character of some subgroup $U\le G_\theta$ and $\psi(1)$ is a $\pi$-number.
Since $\chi=(\chi_\theta)^G=\psi^G$, the result follows in this case.

In the remaining case, $\chi$ is $\pi$-factored.
Then $\chi(1)$ is a $\pi$-number by Lemma \ref{pi-deg}(2),
and the proof is complete by taking $(U,\psi)=(G,\chi)$.
\end{proof}

%%%----------------------------------------------
\section{The uniqueness of twisted vertices}

In this section we will prove our main theorems.
The following `replacement lemma' is key to our purpose,
which might be of independent interest on its own.
Note that the assumption that $2\notin\pi$ is essential,
and the lifts of $\pi$-partial characters are not involved
(compare with Lemma 4.8 of \cite{CL2012}).

\begin{lem}\label{replace}
Let $\chi\in\Irr(G)$, where $G$ is $\pi$-separable with $2\notin\pi$,
and suppose that $\chi=\psi^G$, where $\psi\in\Irr(U)$ is $\pi$-factored and $U\le G$.
Assume that $\chi_N$ has a $\pi$-factored irreducible constituent for some $N\NM G$.
Write $\xi=\psi^{NU}$.
If $|NU:U|$ is a $\pi$-number, then $\xi$ is $\pi$-factored,
and its $\pi$-special factor $\xi_\pi$ and $\pi'$-special factor $\xi_{\pi'}$
satisfy $\xi_{\pi}=(\delta_{(NU,U)}\psi_{\pi})^{NU}$ and $(\xi_{\pi'})_U=\delta_{(NU,U)}\psi_{\pi'}$, respectively.
\end{lem}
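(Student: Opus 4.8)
The plan is to exploit the fact that $NU/N \cong U/(U \cap N)$, so that $\xi = \psi^{NU}$ is, up to the induction from $U$ to $NU$, governed by what happens over $N$; and the hypothesis $|NU:U| = |N : U\cap N|$ being a $\pi$-number forces the $\pi'$-part of the induced character to come entirely from $\psi_{\pi'}$. First I would fix an irreducible $\pi$-factored constituent $\varphi$ of $\chi_N$ and an irreducible constituent $\theta$ of $\varphi_{U\cap N}$ lying under $\psi$; since $\psi$ is $\pi$-factored, $\theta$ is $\pi$-factored (by the Gajendragadkar machinery, as $\theta_\pi = (\psi_\pi)_{U\cap N}$ up to conjugacy and similarly for $\pi'$). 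The point is that $|N:U\cap N|$ being a $\pi$-number and $\theta_{\pi'}$ having $\pi'$-degree forces $(\varphi_{\pi'})_{U\cap N} = \theta_{\pi'}$ to be, in fact, the full restriction with $\varphi_{\pi'}(1) = \theta_{\pi'}(1)$, i.e. $\varphi_{\pi'}$ is linear or at least its restriction to $U\cap N$ stays irreducible — this is the Clifford-theoretic heart of why $\xi_{\pi'}$ restricts back to a multiple of $\psi_{\pi'}$ twisted by the sign.

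Next I would write down the candidate factorization explicitly: set $\alpha = (\delta_{(NU,U)}\psi_\pi)^{NU}$ and show $\alpha$ is $\pi$-special, and set $\beta$ to be the $\pi'$-special character of $NU$ extending $\delta_{(NU,U)}\psi_{\pi'}$ from $U$ (which one must show exists and is unique). The key tools here are: (i) $\delta_{(NU,U)}$ is a $\pi$-special linear character of $U$ by its construction from the $\pi$-standard sign character (note $\delta_{(NU,U)}$ has $2$-power, hence $\pi'$-... — no, since $2\notin\pi$ the sign character is $\pi'$-special of order dividing $2$; I must be careful which way it goes and will need Lemma~\ref{sign} together with the fact that induction of a $\pi$-special character along a subgroup of $\pi$-index is $\pi$-special, a standard result of Isaacs on $\pi$-special characters). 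Then I would verify $\alpha\beta$ is irreducible and equals $\xi$ by checking $(\alpha\beta)_U$ contains $\psi$ — indeed $(\alpha\beta)_U = \alpha_U \beta_U$, and $\beta_U = \delta_{(NU,U)}\psi_{\pi'}$ by construction while $\alpha_U$ contains $\delta_{(NU,U)}^{-1}\psi_\pi = \delta_{(NU,U)}\psi_\pi$ (sign character is its own inverse) by Mackey/Frobenius reciprocity, so the product contains $\psi_\pi\psi_{\pi'} = \psi$ — and then $\xi = \psi^{NU}$ has $(\alpha\beta)$ as a constituent, forcing equality by degree count since $|NU:U|$ is the index. Uniqueness of the $\pi$-factorization (Gajendragadkar) then gives $\xi_\pi = \alpha$ and $\xi_{\pi'} = \beta$, and restricting $\beta$ back to $U$ gives the stated formula for $(\xi_{\pi'})_U$.

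The main obstacle I expect is establishing that $\xi = \psi^{NU}$ is actually \emph{irreducible} and $\pi$-factored in the first place — a priori $\psi^{NU}$ could decompose, and the $\pi$-factoredness is exactly the subtle claim. The leverage must come from the hypothesis that $\chi_N$ (equivalently, by Clifford theory, $\xi_N$) has a $\pi$-factored constituent combined with $|NU:U|$ being a $\pi$-number: I would argue that $\xi$ lies over $\varphi \in \Irr(N)$ which is $\pi$-factored and $U\cap N$-... actually $U$-invariant up to the relevant conjugacy, and then invoke Isaacs' result (Lemma~4.2 of \cite{I2018}, already cited in the proof of Corollary~\ref{basic}) that characters of $NU$ lying over a $\pi$-factored invariant character whose two factors are both invariant are themselves $\pi$-factored — this requires checking that the $\pi$-index hypothesis makes the relevant stabilizer conditions hold. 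Once irreducibility and $\pi$-factoredness of $\xi$ are secured, the identification of the two factors via the explicit formulas is essentially the Frobenius-reciprocity computation sketched above, with Lemma~\ref{sign}(2) handling the cocycle-type identity $\delta_{(NU,U)}$ must satisfy when one restricts and re-inducts.
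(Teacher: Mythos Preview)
Your proposal identifies the right endgame---construct a $\pi'$-special extension $\beta$ of $\delta_{(NU,U)}\psi_{\pi'}$ to $NU$, then obtain $\xi=\alpha\beta$ via the push--pull formula---but the step where you produce $\beta$ (equivalently, where you show $\xi$ is $\pi$-factored) does not go through. You propose to invoke Lemma~4.2 of \cite{I2018} for this. That lemma requires the quotient $NU/N$ to be a $\pi$-group or a $\pi'$-group and the constituent $\varphi\in\Irr(N)$ to have both special factors invariant in $NU$. Neither condition is available: the hypothesis $|NU:U|=|N:N\cap U|$ is a $\pi$-number says nothing about $|NU:N|=|U:N\cap U|$, and there is no reason for $\varphi$ to be $NU$-invariant. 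So Lemma~4.2 simply does not apply, and you are left with no mechanism to establish either the $\pi$-factoredness of $\xi$ or the existence of the extension $\beta$. (Irreducibility of $\xi$ itself is free, since $\xi^G=\chi\in\Irr(G)$; the genuine obstacle is exactly the factoredness, as you suspected.)

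The paper's proof handles this by a double induction on $|G:U|$ and on $|N|$. After reducing to $G=NU$, it passes to a chief factor $N/K$ and shows (using $2\notin\pi$ and the odd-order theorem) that one may eventually assume $N\cap U=K\NM G$. This extra normality is the whole point: with $D=N\cap U\NM G$ one has $(\theta_{\pi'})_D=\eta_{\pi'}$ for suitable $\theta\in\Irr(N)$ and $\eta\in\Irr(D)$, the uniqueness of the $\pi'$-special lift over $D$ gives $G_{\eta_{\pi'}}=G_{\theta_{\pi'}}$, and then Lemma~2.11 and Corollary~3.15 of \cite{I2018} furnish the $\pi'$-special extension $\beta$ with $\beta_U=\delta_{(G,U)}\psi_{\pi'}$. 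From there $\chi=(\delta_{(G,U)}\psi_\pi\cdot\beta_U)^G=\alpha\beta$, and Theorem~7.25 of \cite{I2018} certifies that $\alpha$ is $\pi$-special. The induction is not scaffolding you can discard; it is precisely what manufactures the normality $N\cap U\NM G$ needed for the extension step, and your direct attack has no substitute for it.
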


\begin{proof}
We proceed by double induction, first on $|G:U|$ and then on $|N|$.
Note that all irreducible constituents of $\chi_N$ are conjugate in $G$ by Clifford's theorem,
and since $\xi^G=\psi^G=\chi$, it follows that $\xi$ lies under $\chi$ and thus each irreducible constituent of $\xi_N$ is also $\pi$-factored.
So it is no loss to assume that $G=NU$.

If $N=1$, then $G=NU=U$, and there is noting to prove.
So we assume that $N>1$, and take $N/K$ to be a chief factor of $G$.
Write $D=N\cap U$, so that $D\le KD\le N$.
If $KD=N$, then $G=(KD)U=KU$, and since $K<N$
and each irreducible constituent of $\chi_K$ is $\pi$-factored,
the result follows by the inductive hypothesis applied in the group $N$.

We can assume, therefore, that $KD<N$.
Write $X=KD$. Then $|N:X|>1$ divides $|G:U|$, which is a $\pi$-number,
so $N/K$ is a $\pi$-group.
Since we are assuming that $2\notin\pi$,
it follows that the chief factor $N/K$ has odd order and hence is abelian
(using the odd-order theorem).
This shows that $X\NM N$, which forces $X=K$, and thus $D\leq K<N$.
In this case, we have two possibilities: either $D<K$ or $D=K$.

Suppose first that $D<K$. Let $Y=KU$ and $\zeta=\psi^Y$.
Then $U<Y<G$ and $\chi=\zeta^G$, so that each irreducible constituent of $\zeta_K$ lies under $\chi$
and thus is $\pi$-factored.
Since $|Y:U|<|G:U|$, the inductive hypothesis (with $Y$ in place of $G$)
implies that $\zeta$ is $\pi$-factored with $\zeta_\pi=(\delta_{(Y,U)}\psi_{\pi})^Y$ and $(\zeta_{\pi'})_U=\delta_{(Y,U)}\psi_{\pi'}$.
Again, we have $|G:Y|<|G:U|$, so the inductive hypothesis guarantees that
$\chi$ is $\pi$-factored with $\chi_\pi=(\delta_{(G,Y)}\zeta_\pi)^G$ and $(\chi_{\pi'})_Y=\delta_{(G,Y)}\zeta_{\pi'}$.
Combining these two cases and using the fact that
$(\delta_{(G,Y)})_U\delta_{(Y,U)}=\delta_{(G,U)}$ (see Lemma \ref{sign}(2)),  we obtain
$$\chi_\pi=(\delta_{(G,Y)}(\delta_{(Y,U)}\psi_{\pi})^Y)^G=
((\delta_{(G,Y)})_U\delta_{(Y,U)}\psi_{\pi})^Y)^G=(\delta_{(G,U)}\psi_{\pi})^G$$
and $(\chi_{\pi'})_U=(\delta_{(G,Y)})_U\delta_{(Y,U)}\psi_{\pi'}=
\delta_{(G,U)}\psi_{\pi'}$, as required.

Now suppose that $D=K\NM G$.
Let $\eta\in\Irr(D)$ lie under $\psi$,
and choose some $\theta\in\Irr(N|\eta)$ lying under $\chi$.
Then $\eta$ is $\pi$-factored because $\psi$ is $\pi$-factored and $D\NM U$. Note that $\theta_{\pi'}$ restricts irreducibly to $D$ (see Theorem 2.10 of \cite{I2018}),
and by the normality of $D$ in $N$, it is easy to see that $(\theta_{\pi'})_D=\eta_{\pi'}$.
Also, since $\theta_{\pi'}$ is the unique $\pi'$-special character of $N$ lying over $\eta_{\pi'}$,  both characters have the same inertia group in $G$.
Observe that $D\NM G$ is contained in the kernel of $\delta_{(G,U)}$ (Lemma \ref{sign}(3))
and that $\psi_{\pi'}$ lies over $\eta_{\pi'}$, so  $\delta_{(G,U)}\psi_{\pi'}$ also lies over $\eta_{\pi'}$ and extends to $G$ by Lemma 2.11 of \cite{I2018}.
Furthermore, we can deduce from Corollary 3.15 of \cite{I2018}
(with the roles of $\pi$ and $\pi'$ interchanged) that
there exists a $\pi'$-special character $\beta$ of $G$ such that
$\beta_U=\delta_{(G,U)}\psi_{\pi'}$.
Write $\alpha=(\delta_{(G,U)}\psi_{\pi})^G$. Then we have
$$\chi=\psi^G=(\delta_{(G,U)}\psi_{\pi}\beta_U)^G=\alpha\beta,$$
so $\alpha$ is irreducible, and by Theorem 7.25 of \cite{I2018}, we see that $\alpha$ is $\pi$-special.
This shows that $\chi$ is $\pi$-factored with $\chi_\pi=\alpha$ and $\chi_{\pi'}=\beta$,
and the proof is now complete.
\end{proof}

The following technical result will be used twice later.
\begin{cor}\label{simple}
Let $G$ be $\pi$-separable with $2\notin\pi$, and let $\chi\in\Irr(G)$.
If $\chi$ is $\pi$-factored and $\chi(1)$ is a $\pi$-number,
then all of the twisted vertices for $\chi$ are conjugate in $G$.
\end{cor}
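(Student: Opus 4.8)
The plan is to reduce everything to the replacement lemma (Lemma \ref{replace}) and the transitivity of the $\pi$-standard sign character (Lemma \ref{sign}). Suppose $(Q,\delta)$ is a twisted vertex for $\chi$, witnessed by a subgroup $U\le G$ with $Q\in\Hall_{\pi'}(U)$ and a $\pi$-factored $\psi\in\Irr(U)$ with $\psi^G=\chi$ and $\delta=(\delta_{(U,Q)}\psi_{\pi'})_Q$. The first step is to show that in fact $U=G$ and $\psi=\chi$, up to what the lemma forces. Since $\chi$ is $\pi$-factored with $\chi(1)$ a $\pi$-number, its $\pi'$-special factor $\chi_{\pi'}$ is linear. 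I would like to apply Lemma \ref{replace} repeatedly along a chief series of $G$ refining $U\le G$; but that lemma requires each successive index to be a $\pi$-number, which need not hold. The right move is instead to observe that $\chi(1)=|G:U|\psi(1)$ is a $\pi$-number, so $|G:U|$ is automatically a $\pi$-number, and likewise $\psi(1)$ is a $\pi$-number. Now one can climb a chain $U=U_0\NM U_1\NM\cdots\NM U_k=G$ (taking a chief series of $G$ through $U$ — since $|G:U|$ is a $\pi'$-free, i.e.\ $\pi$-number index it need not be subnormal, so more care is needed here: one should instead induct on $|G:U|$ directly, choosing $N\NM G$ with $N\not\le U$ and applying Lemma \ref{replace} to $NU$, whose index over $U$ divides $|G:U|$ hence is a $\pi$-number).

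Concretely, I would argue by induction on $|G:U|$. If $U=G$ there is nothing to prove: then $\delta=(\delta_{(G,Q)}\chi_{\pi'})_Q$ with $Q$ a fixed Hall $\pi'$-subgroup, and any two such $Q$ are $G$-conjugate, carrying the associated $\delta$ along with them. If $U<G$, pick a minimal normal subgroup $N$ of $G$; if $N\le U$ we may pass to $G/N$ after checking $\chi$ descends appropriately, and if $N\not\le U$ we set $\widehat U=NU$ and $\widehat\psi=\psi^{NU}$. Since $|NU:U|$ divides $|G:U|=\chi(1)/\psi(1)$, it is a $\pi$-number, and the hypothesis of Lemma \ref{replace} is met because $\chi_N$ has a $\pi$-factored constituent (namely any constituent of $\chi_N$, as $\chi$ itself is $\pi$-factored). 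The lemma then tells us $\widehat\psi$ is $\pi$-factored with $(\widehat\psi_{\pi'})_U=\delta_{(NU,U)}\psi_{\pi'}$. Using Lemma \ref{sign}(1) to rewrite $\delta_{(U,Q)}$ and $\delta_{(NU,Q)}$ as determinant-sign characters, together with Lemma \ref{sign}(2) in the form $\delta_{(NU,Q)}=(\delta_{(NU,U)})_Q\,\delta_{(U,Q)}$, one checks that the twisted-vertex datum $(Q,\delta)$ coming from $(U,\psi)$ equals the twisted-vertex datum coming from $(NU,\widehat\psi)$: indeed $(\delta_{(NU,Q)}\widehat\psi_{\pi'})_Q = ((\delta_{(NU,U)})_Q\delta_{(U,Q)}\widehat\psi_{\pi'})_Q$, and restricting the identity $(\widehat\psi_{\pi'})_U=\delta_{(NU,U)}\psi_{\pi'}$ down to $Q$ gives $(\widehat\psi_{\pi'})_Q=(\delta_{(NU,U)})_Q(\psi_{\pi'})_Q$, so the two expressions for $\delta$ agree. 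Thus $(Q,\delta)$ is also a twisted vertex for $\chi$ witnessed by the strictly larger subgroup $NU$, and $\widehat\psi(1)=|NU:U|\psi(1)$ is still a $\pi$-number so $\widehat\psi_{\pi'}$ is still linear; by induction on $|G:U|$ we conclude that $(Q,\delta)$ is $G$-conjugate to the canonical twisted vertex built from $(G,\chi)$, and transitivity of conjugacy finishes the proof.

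The main obstacle I anticipate is the bookkeeping with the sign characters: one must be scrupulous that the $\epsilon=(\det((1_Q)^U))_Q$ appearing in Definition 2.1 really is $\delta_{(U,Q)}$ (which is exactly Lemma \ref{sign}(1)), and then that the cocycle relation Lemma \ref{sign}(2) is applied with the chain $Q\le U\le NU$ in the correct order, so that the sign contributions telescope correctly as $U$ grows to $G$. A secondary point to handle carefully is the case $N\le U$: there one restricts to the quotient, using that a $\pi$-factored character of $G$ with $\pi$-number degree stays $\pi$-factored with $\pi$-number degree modulo a normal $\pi$- or $\pi'$-subgroup, and that twisted vertices are compatible with passage to $G/N$ when $N\le Q'$ or $N$ is a $\pi$-group — but since we may always choose $N$ a minimal normal (hence a $\pi$- or $\pi'$-group) subgroup, and if $N$ is a $\pi'$-group then $N\le$ some conjugate of $Q$ forcing us back into the $N\not\le U$ branch after conjugating, the only genuinely new subcase is $N$ a $\pi$-group contained in $U$, which is routine. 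Everything else is the induction machine described above together with the already-established Lemma \ref{replace}.
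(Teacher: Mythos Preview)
Your core observations are right: $|G:U|$ is a $\pi$-number since $\chi(1)=|G:U|\psi(1)$ is, Lemma~\ref{replace} is the relevant tool, and the cocycle relation of Lemma~\ref{sign}(2) telescopes the sign characters correctly. But you have over-engineered the argument and in doing so introduced a genuine gap.

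The gap is your treatment of the case $N\le U$. You cannot ``pass to $G/N$'': the character $\chi$ need not have $N$ in its kernel, so there is nothing to descend to. Your attempted workaround (``if $N$ is a $\pi'$-group then $N\le$ some conjugate of $Q$ forcing us back into the $N\not\le U$ branch after conjugating'') does not work either, since $N\NM G$ means conjugation cannot move $N$ out of $U$. And the assertion that the $\pi$-group subcase is ``routine'' is not justified by anything you wrote.

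The paper avoids all of this by applying Lemma~\ref{replace} in a \emph{single} step with $N=G$. The hypotheses are immediate: $\chi_G=\chi$ is $\pi$-factored by assumption, and $|NU:U|=|G:U|$ is a $\pi$-number by your own observation. The lemma then yields $(\chi_{\pi'})_U=\delta_{(G,U)}\psi_{\pi'}$ directly, whence
\[
\delta=\delta_{(U,Q)}(\psi_{\pi'})_Q
=\delta_{(U,Q)}(\delta_{(G,U)})_Q(\chi_{\pi'})_Q
=\delta_{(G,Q)}(\chi_{\pi'})_Q,
\]
and since $Q\in\Hall_{\pi'}(G)$ is unique up to $G$-conjugacy, so is $(Q,\delta)$. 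Your outer induction on $|G:U|$ merely re-traces, one minimal normal subgroup at a time, the induction already built into the proof of Lemma~\ref{replace}; dropping it both shortens the argument and eliminates the problematic case.
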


\begin{proof}
Let  $(Q,\delta)$ be a twisted vertex for $\chi$.
By definition, $Q$ is a Hall $\pi'$-subgroup of some subgroup $U\le G$,
and there is a $\pi$-factored character $\psi$ of $U$ such that $\psi^G=\chi$
and $\delta=\delta_{(U,Q)}(\psi_{\pi'})_Q$.
Since $\chi$ has $\pi$-degree, we see that $|G:U|$ is a $\pi$-number and hence $Q$ is a Hall $\pi'$-subgroup of $G$. Also, by Lemma \ref{replace} (with $N=G$),
we know that $(\chi_{\pi'})_U=\delta_{(G,U)}\psi_{\pi'}$, and thus
$$\delta=\delta_{(U,Q)}(\psi_{\pi'})_Q=\delta_{(U,Q)}(\delta_{(G,U)})_Q(\chi_{\pi'})_Q
=\delta_{(G,Q)}(\chi_{\pi'})_Q.$$
Since the Hall $\pi'$-subgroup $Q$ is uniquely determined up to $G$-conjugacy,
it is clear that the twisted vertex $(Q,\delta)=(Q,\delta_{(G,Q)}(\chi_{\pi'})_Q)$
is also unique up to conjugation in $G$, as desired.
\end{proof}

We consider the Clifford correspondents for $\pi$-factored characters.

\begin{lem}\label{Clifford}
Let $G$ be a $\pi$-separable group with $2\notin\pi$,
and let $\chi\in\Irr(G)$ be $\pi$-factored with $\chi(1)$ a $\pi$-number.
Assume that $N\NM G$ and $\theta\in\Irr(N)$ lie under $\chi$.
Let $\psi\in\Irr(T)$ be the Clifford correspondent of $\chi$ with respect to $\theta$,
where $T=G_\theta$ is the inertia group of $\theta$ in $G$.
Then $\psi$ is $\pi$-factored, and also $\chi_\pi=(\delta_{(G,T)}\psi_\pi)^G$
and $(\chi_{\pi'})_T=\delta_{(G,T)}\psi_{\pi'}$.
\end{lem}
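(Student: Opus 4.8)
The plan is to deduce Lemma \ref{Clifford} from the replacement lemma (Lemma \ref{replace}) by taking $U=T$ and exploiting the fact that, since $\chi$ has $\pi$-degree and $\psi^G=\chi$, the index $|G:T|$ automatically splits into a $\pi$-part and a $\pi'$-part that are handled separately. First I would observe that $\psi$ is $\pi$-factored: because $\theta$ lies under the $\pi$-factored character $\chi$ and $N\NM G$, Clifford theory together with the Gajendragadkar correspondence (Theorem 2.10 of \cite{I2018}) gives that $\theta$ is $\pi$-factored, and then $\psi$ — being the Clifford correspondent — is $\pi$-factored by a standard argument (for instance via \cite{I2018}); note also that $\psi(1)$ is a $\pi$-number since $\chi(1)=|G:T|\theta(1)$ is, and $\theta(1)\mid\psi(1)$.

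Next I would interpolate a subgroup between $T$ and $G$ to separate the $\pi$ and $\pi'$ parts of the index. Let $T\le H\le G$ with $|H:T|$ the $\pi'$-part of $|G:T|$ and $|G:H|$ the $\pi$-part (such an $H$ exists because in a $\pi$-separable group one may build up through a chief series, or one can simply apply the lemma repeatedly along a chief series of $G$ over the core of $T$). Actually the cleanest route is: apply Lemma \ref{replace} iteratively. Choose a chain of normal subgroups of $G$ refining $1\le N\le G$ and at each stage let $U$ grow to $NU$ for the appropriate $N$; but more directly, since $\chi$ has $\pi$-degree, every irreducible constituent of $\chi_M$ for $M\NM G$ is $\pi$-factored (by Lemma 4.2 and Corollary 3.15 of \cite{I2018}, or because $\chi$ itself witnesses that $\chi_M$ has a $\pi$-factored constituent and all are conjugate). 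So the hypothesis ``$\chi_M$ has a $\pi$-factored irreducible constituent'' needed to invoke Lemma \ref{replace} is met for every normal subgroup $M$.

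I would then run the following two-step reduction. Step one: along a chief series of $G$ passing through $\mathrm{core}_G(T)$, repeatedly apply Lemma \ref{replace} to climb from $T$ up to $H$, where $H/\mathrm{core}_G(T)$ is built by adjoining $\pi$-chief factors; since $|H:T|$ is a $\pi$-number, the lemma gives that $\psi^{H}$ is $\pi$-factored with $(\psi^H)_\pi=(\delta_{(H,T)}\psi_\pi)^{H}$ and $((\psi^H)_{\pi'})_T=\delta_{(H,T)}\psi_{\pi'}$, using Lemma \ref{sign}(2) to compose the standard sign characters along the chain exactly as in the proof of Lemma \ref{replace}. Step two: from $H$ up to $G$ the remaining index $|G:H|$ is a $\pi'$-number; here $\psi^H$ already has $\pi$-degree, so $|G:H|$ being the full $\pi'$-part means $(\psi^H)^G=\chi$ forces the $\pi'$-special factors to match after restriction — again I would invoke Lemma \ref{replace} (or its symmetric analogue) with the roles arranged so that the $\pi'$-special factor extends and the $\pi$-special factor induces irreducibly, exactly the mechanism in the ``$D=K\NM G$'' case of Lemma \ref{replace}. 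Composing the two steps and using $(\delta_{(G,H)})_T\,\delta_{(H,T)}=\delta_{(G,T)}$ from Lemma \ref{sign}(2) yields $\chi_\pi=(\delta_{(G,T)}\psi_\pi)^G$ and $(\chi_{\pi'})_T=\delta_{(G,T)}\psi_{\pi'}$.

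The main obstacle I anticipate is that Lemma \ref{replace} as stated requires the top index $|NU:U|$ to be a \emph{$\pi$-number}, so it does not directly cover the $\pi'$-part of $|G:T|$; one must either prove a companion statement with $\pi$ and $\pi'$ interchanged, or argue that when $\chi$ already has $\pi$-degree the $\pi'$-extension step is forced for degree reasons and handled by the $D=K\NM G$ analysis inside Lemma \ref{replace}'s proof rather than its statement. A secondary technical point is justifying that every $M\NM G$ has $\chi_M$ with a $\pi$-factored constituent — but this is immediate since $\chi$ is $\pi$-factored of $\pi$-degree, so its constituents on any normal subgroup are $\pi$-factored by Gajendragadkar's theory. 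Once these are in hand, the composition of sign characters is routine bookkeeping via Lemma \ref{sign}.
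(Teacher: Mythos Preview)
Your plan has two real problems, and it also misses a simplification that makes the whole two-step scheme unnecessary.

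First, since $\psi^G=\chi$ you have $\chi(1)=|G:T|\,\psi(1)$, and $\chi(1)$ is a $\pi$-number by hypothesis; hence $|G:T|$ is already a $\pi$-number. There is no $\pi'$-part of the index to ``handle separately'', and your anticipated main obstacle simply does not arise. (Your write-up is also internally inconsistent here: you first declare $|H:T|$ to be the $\pi'$-part of $|G:T|$ and then in Step one assume $|H:T|$ is a $\pi$-number.) With this observation, Lemma~\ref{replace} applies in one shot with $N=G$ and $U=T$, and it yields exactly the two formulas you want---\emph{provided} you already know $\psi$ is $\pi$-factored.

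That is the second and more serious gap: you wave off ``$\psi$ is $\pi$-factored'' as a standard fact, but it is the substantive content of the lemma, and Lemma~\ref{replace} cannot be invoked until you have it. It is \emph{not} automatic that Clifford correspondents of $\pi$-factored characters are $\pi$-factored; in general $T=G_{\theta_\pi}\cap G_{\theta_{\pi'}}$ need not equal $G_{\theta_\pi}$, so one cannot simply take Clifford correspondents factor by factor. The paper's proof hinges on the observation you omit: $\lambda:=\chi_{\pi'}$ is \emph{linear} (because $\chi$ has $\pi$-degree), so $\lambda_N=\theta_{\pi'}$ is $G$-invariant and $T=G_{\theta_\pi}$. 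Then the Clifford correspondent $\xi$ of the $\pi$-special character $\chi_\pi$ with respect to $\theta_\pi$ lives at $T$, one checks $\psi=\xi\,\lambda_T$, and Theorem~2.37 of \cite{I2018} gives that $\delta_{(G,T)}\xi$ is $\pi$-special while $\delta_{(G,T)}\lambda_T$ is $\pi'$-special. This simultaneously proves that $\psi$ is $\pi$-factored and identifies $\psi_\pi,\psi_{\pi'}$, from which both formulas follow immediately---without any appeal to Lemma~\ref{replace}. If you patch your argument by supplying this step, applying Lemma~\ref{replace} afterwards becomes redundant.
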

\begin{proof}
By the hypothesis on $\chi$, it is easy to see that $\theta$ is $\pi$-factored.
Write $\lambda=\chi_{\pi'}$.
Then $\lambda$ is linear because $\chi(1)$ is a $\pi$-number,
and thus $\lambda_N=\theta_{\pi'}$.
In particular, $\theta_{\pi'}$ is invariant in $G$,
and we have $T=G_{\theta_\pi}\cap G_{\theta_{\pi'}}=G_{\theta_{\pi}}$.
Now $\chi_\pi$ lies over $\theta_\pi$, and we let $\xi\in\Irr(T)$
be the Clifford correspondent of $\chi_\pi$ with respect to $\theta_\pi$.
Then $\xi^G=\chi_\pi$, so $(\xi\lambda_T)^G=\xi^G\lambda=\chi$.
Clearly $\xi\lambda_T$ lies over $\theta_\pi\theta_{\pi'}=\theta$,
and hence $\psi=\xi\lambda_T$.
By Theorem 2.37 of \cite{I2018}, we see that $\delta_{(G,T)}\xi$ is $\pi$-special.
On the other hand, since $\lambda$ is a $\pi'$-special linear character of $G$,
it follows that $\lambda_T$ is also $\pi'$-special.
Note that $\delta_{(G,T)}$ is always $\pi'$-special, and so $\delta_{(G,T)}\lambda_T$ is $\pi'$-special. This shows that $\psi$ is $\pi$-factored with
$\psi_\pi=\delta_{(G,T)}\xi$ and $\psi_{\pi'}=\delta_{(G,T)}\lambda_T$,
and thus $\chi_\pi=\xi^G=(\delta_{(G,T)}\psi_\pi)^G$
and $(\chi_{\pi'})_T=\lambda_T=\delta_{(G,T)}\psi_{\pi'}$,
as desired.
\end{proof}

Now we are ready to prove Theorems A and B.
Since the arguments for both results are similar,
we combine them into a single theorem as follows.

\begin{thm}
Let $G$ be $\pi$-separable with $2\notin\pi$, and let $\chi\in\Irr(G)$ be a $\pi$-lift.
Assume one of the following conditions.

{\rm (a)} $\chi$ is an $\mathcal{N}$-lift for some $\pi$-chain $\mathcal{N}$ of $G$.

{\rm (b)} $\chi$ has a linear Navarro vertex.\\
Then all of the linear twisted vertices for $\chi$ are conjugate in $G$.
\end{thm}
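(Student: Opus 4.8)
The plan is to prove both cases by induction on $|G|$, exploiting the fact that a linear twisted vertex $(Q,\delta)$ of $\chi$ records the data $\delta_{(U,Q)}(\psi_{\pi'})_Q$ for some $\pi$-factored $\psi\in\Irr(U)$ with $\psi^G=\chi$, and that the $\pi'$-degree of $\psi$ equals $\chi(1)_{\pi'}$. The base cases are handled by Corollary~\ref{simple}: if $\chi$ itself is $\pi$-factored of $\pi$-degree, then by Corollary~\ref{basic} (in case (a)) or directly (a $\pi$-lift with a linear Navarro vertex forces $\chi(1)$ to be a $\pi$-number in the $G$-invariant situation) we are done. So I would assume $\chi$ is \emph{not} $\pi$-factored, or more precisely that there is a proper normal subgroup on which $\chi$ lies over a non-invariant character.

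First I would produce a proper normal subgroup $N\NM G$ together with a $\pi$-factored constituent $\theta\in\Irr(N)$ of $\chi_N$ that is \emph{not} $G$-invariant. In case (a) this is exactly the mechanism from the proof of Corollary~\ref{basic}: take $N\in\mathcal N$ maximal with $\chi_N$ having $\pi$-factored constituents; maximality plus Lemma~4.2 of \cite{I2018} forces $\theta$ to be non-invariant, and by Lemma~\ref{pi-deg} $\theta$ has $\pi$-degree while $\chi_\theta\in\Irr(G_\theta)$ is an $\mathcal N_\theta$-lift. In case (b), one uses the normal nucleus $(W,\phi)$ giving the linear Navarro vertex: there is a term $N$ in the associated normal series with a $\pi$-factored, non-$G$-invariant constituent $\theta$ of $\chi_N$, and passing to $T=G_\theta$ one checks that $\chi_\theta$ still has a linear Navarro vertex (the normal nucleus construction restricts compatibly). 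In both cases $T=G_\theta<G$.

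Next I would show that passing to the Clifford correspondent $\chi_\theta\in\Irr(T)$ induces a bijection, compatible with $G$-conjugacy, between linear twisted vertices of $\chi$ and linear twisted vertices of $\chi_\theta$. Given a linear twisted vertex $(Q,\delta)$ of $\chi$ coming from $\psi\in\Irr(U)$ with $\psi^G=\chi$, one conjugates so that a chosen constituent of $\psi_N$ is $\theta$-conjugate appropriately, intersects with $T$, and uses Lemma~\ref{Clifford} together with the cocycle identity Lemma~\ref{sign}(2) for $\delta_{(-,-)}$ to rewrite $\delta_{(U,Q)}(\psi_{\pi'})_Q$ in terms of the corresponding data inside $T$; the key point is that $|NU:U|$ and the relevant indices behave well because $\theta$ has $\pi$-degree, so $\psi^{NU}$ is $\pi$-factored by Lemma~\ref{replace} and one can replace $U$ by $NU\cap T$ without changing $Q$ or $\delta$. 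Conversely a linear twisted vertex of $\chi_\theta$ induces one of $\chi$ upon inducing up, again using Lemma~\ref{replace} (now with $NU=G$-side bookkeeping) to control the $\pi$-standard sign characters. Having set up this correspondence, the inductive hypothesis applied to $\chi_\theta$ in $T$ (which satisfies (a) or (b) respectively) gives that all linear twisted vertices of $\chi_\theta$ are $T$-conjugate, and standard Clifford theory transports this to $G$-conjugacy of the linear twisted vertices of $\chi$.

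The main obstacle I expect is the bookkeeping in the Clifford-reduction step: one must check that \emph{every} linear twisted vertex of $\chi$ — not just those arising from a nucleus — can be conjugated into a form compatible with the fixed $\theta$ and then descended to $T$, and that the sign character $\delta_{(U,Q)}$ transforms exactly into $\delta_{(U\cap T,Q)}$ times the contribution predicted by Lemma~\ref{Clifford}, with no stray sign. This requires carefully combining Lemma~\ref{sign}(2)--(3) with Lemma~\ref{replace} applied to the subgroup $NU$ and the normal subgroup $N$, and verifying that $(\xi_{\pi'})_U=\delta_{(NU,U)}\psi_{\pi'}$ matches the $\pi'$-special factor seen by the Clifford correspondent inside $T$. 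The $2\notin\pi$ hypothesis enters precisely here (as in Lemma~\ref{replace}), since the odd-order theorem is what makes the relevant $\pi$-chief factors abelian and keeps the sign characters well-defined and multiplicative. Once this single reduction lemma is proven cleanly, both Theorem~A and Theorem~B follow by the same induction.
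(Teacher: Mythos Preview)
Your proposal follows essentially the same inductive architecture as the paper's proof: base case via Corollary~\ref{simple}, choice of $N\NM G$ and a non-invariant $\pi$-factored $\theta\in\Irr(N)$ of $\pi$-degree, reduction to $N\le U$ via Lemma~\ref{replace}, descent to $T=G_\theta$ via Lemma~\ref{Clifford}, and induction on $\chi_\theta$.

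One point is worth sharpening. You phrase the reduction as ``replace $U$ by $NU\cap T$ without changing $Q$ or $\delta$'', but after replacing $U$ by $NU$ the Hall $\pi'$-subgroup $Q$ need not lie in $T$ at all. The paper's actual mechanism is cleaner and avoids any direct sign-character computation between $(Q,\delta)$ and the data inside $T$: having arranged $N\le U$ and $\theta$ under $\psi$, it takes $S=U_\theta$, $\beta=\psi_\theta\in\Irr(S)$, a Hall $\pi'$-subgroup $Q_1$ of $S$, and $\delta_1=\delta_{(S,Q_1)}(\beta_{\pi'})_{Q_1}$. Since $\beta^U=\psi$ and $\beta^T=\chi_\theta$, the pair $(Q_1,\delta_1)$ is simultaneously a linear twisted vertex for $\psi$ and for $\chi_\theta$. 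Now Corollary~\ref{simple} is invoked a \emph{second} time---not on $\chi$, but on the $\pi$-factored character $\psi$ of $\pi$-degree---to conclude that $(Q,\delta)$ and $(Q_1,\delta_1)$ are $U$-conjugate. This is the step that converts an arbitrary linear twisted vertex of $\chi$ into one visibly arising from $\chi_\theta$, and it bypasses the delicate sign bookkeeping you flag as the main obstacle. Your framing as a ``bijection'' is also more than is needed; only this one direction is required.
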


\begin{proof}
We proceed by induction on $|G|$.
Fix an arbitrary linear twisted vertex $(Q,\delta)$ for $\chi$,
so by definition, $Q$ is a Hall $\pi'$-subgroup of some subgroup $U\le G$,
and there is a $\pi$-factored character $\psi$ of $U$ such that $\psi^G=\chi$
and $\delta=\delta_{(U,Q)}(\psi_{\pi'})_Q$.
Note that $\psi$ has $\pi$-degree because $\psi_{\pi'}(1)=\delta(1)=1$.

First, suppose that $\chi$ is $\pi$-factored.
By Corollary \ref{simple}, it suffices to prove that $\chi(1)$ is a $\pi$-number.
In case (a), this is true by Lemma \ref{pi-deg}(2),
and in case (b), since $(G,\chi)$ is itself a normal nucleus for $\chi$,
it follows by the definition of Navarro vertices that $\chi$ has $\pi$-degree.

Now suppose that $\chi$ is not $\pi$-factored.
We need to pick out an appropriate normal subgroup $N$ of $G$
such that each irreducible constituents of $\chi_N$ is $\pi$-factored but not invariant in $G$.
In case (a), we can choose $N\in\mathcal N$ maximal with the property that
an irreducible constituent $\theta$ is $\pi$-factored,
and so the same reasoning as we did in the beginning of the proof of Corollary \ref{basic},
yields $G_\theta<G$.
In case (b), we let $N$ to be the unique normal subgroup of $G$ such that
some $\theta\in\Irr(N)$ lying under $\chi$ is $\pi$-factored,
and by the construction of Navarro vertices, we have $G_\theta<G$, too.
Observe that in both cases the Clifford correspondent $\chi_\theta$ of $\chi$ with respect to $\theta$ also satisfies the assumption on $\chi$ (by Lemma \ref{pi-deg}(1) in case (a)).
Since $G_\theta<G$, the inductive hypothesis implies that
all the linear twisted vertices for $\chi_\theta$ are conjugate in $G_\theta$,
and we will complete the proof by showing that $(Q,\delta)$ is $G$-conjugate to some linear twisted vertex for $\chi_\theta$.

We may assume that $N\le U$.
To see this, note that $\theta(1)$ is a $\pi$-number in both case (a) by Lemma \ref{pi-deg}(2)
and case (b) by the construction of Navarro vertices.
It follows that $\theta^0\in\I(N)$ is an irreducible constituent of $(\chi^0)_N$ with $\pi$-degree.
Recall that $\chi=\psi^G$, so $\chi^0=(\psi^0)^G$ and $\psi^0\in\I(U)$, and Lemma 5.21 of \cite{I2018} applies.
We conclude that $|NU:U|$ is a $\pi$-number, and thus $Q$ is a Hall $\pi'$-subgroup of $NU$.
Write $\xi=\psi^{NU}$, so that $\xi^G=\chi$. By Lemma \ref{replace},
$\xi$ is $\pi$-factored with $(\xi_{\pi'})_U=\delta_{(NU,U)}\psi_{\pi'}$, and we have
$$\delta=\delta_{(U,Q)}(\psi_{\pi'})_Q=\delta_{(U,Q)}(\delta_{(NU,U)})_Q(\xi_{\pi'})_Q
=\delta_{(NU,Q)}(\xi_{\pi'})_Q.$$
This shows that $(Q,\delta)$ is also a linear twisted vertex for $\xi$,
and we can thus replace $U$ by $NU$ and $\psi$ by $\xi$.
So it is no loss to assume that $N\le U$, as wanted.

Furthermore, we may assume that $\theta$ lies under $\psi$ by replacing $\theta$ with a conjugate if necessary.
For notational simplicity, we write $T=G_\theta$ and $S=U_\theta$ for the inertia groups of $\theta$ in $G$ and in $U$,
and let $\alpha=\chi_\theta$ and $\beta=\psi_\theta$ be the Clifford correspondents of $\chi$ and $\psi$ with respect to $\theta$, respectively.
By Lemma \ref{Clifford}, we know that $\beta\in\Irr(S)$ is $\pi$-factored with $(\psi_{\pi'})_S=\delta_{(U,S)}\beta_{\pi'}$.
Now let $Q_1$ be a Hall $\pi'$-subgroup of $S$ and let $\delta_1=\delta_{(S,Q_1)}(\beta_{\pi'})_{Q_1}$.
Since $\psi=\beta^U$ and $\alpha=\beta^T$ by the Clifford correspondence,
it follows by definition that $(Q_1,\delta_1)$ is a common linear twisted vertex for $\psi$
and $\alpha$.
But $\psi$ is $\pi$-factored with $\pi$-degree,
so Corollary \ref{simple} implies that $(Q,\delta)$ and $(Q_1,\delta_1)$ are conjugate in $U$. This shows that $(Q,\delta)$ is conjugate to a linear twisted vertex for $\alpha=\chi_\theta$, and the proof is now complete.
\end{proof}

%%%%%%%%%%%%%%%%%%%%%%%%%%%%%%%%%%%%
\section*{Acknowledgement}

This work was supported by the NSF of China (No. 12171289) and the NSF of Shanxi Province (Nos. 20210302123429, 20210302124077).

%The author is grateful to the referee for the valuable suggestions which greatly improved the manuscript.

%%%%%%%%%%%%%%%%%%%%%%%%%%%%%%%%%%%%%%%%%%

%%%%%%%%%%%%%%%%%%%%%%%%%%%%%%%%%%%%%%%%%%
\end{document}